 \def\ikg{\mathcal{I}^k_G}
 \def\ikp{\mathcal{I}^k_P}
\newtheorem{construction}{Construction}[section]
\newtheorem{theorem}[construction]{Theorem}
\newtheorem{conjecture} {Conjecture}
\newtheorem{corollary} [construction]{Corollary}
\newtheorem{definition} [construction]{Definition}
\newtheorem{lemma} [construction]{Lemma}
  \newcommand{\flip}{\ensuremath{\mbox{\sf flip}}}
\begin{document}
\begin{frontmatter}

\title{On Stars in Caterpillars and Lobsters}

\address[daj]{Instituto de Matem\'{a}tica Aplicadas de San Luis, CONICET-UNSL.}
\address[UNSL]{Departamento de Matem\'atica, Universidad Nacional del San Luis, San Luis, Argentina}

\author[UNSL]{Emiliano J.J. Estrugo}\ead{juan.estrugo.tag@gmail.com} 
\author[daj,UNSL]{Adri\'{a}n Pastine}\ead{agpastine@unsl.edu.ar}

\begin{abstract} 
The family of all $k$-independent sets of a graph containing a fixed vertex $v$ is called a {star} and $v$ is called its center. Stars are interesting for their relation to Erd\"{o}s-Ko-Rado graphs. Hurlbert and Kamat conjectured that in trees the largest stars are centered in leafs. This conjecture was disproven independently by Baber, Borg, and Feghali, Johnson, and Thomas. In this paper we introduce a tool to bound the size of stars centered at certain vertices by stars centered at leafs. We use this tool to show that caterpillars and sunlet graphs satisfy Hurlbert and Kamat's conjecture, and to show that the centers of the largest stars in lobsters are either leafs or spinal vertices of degree 2.
\end{abstract}
\begin{keyword} 

Stars in trees, Hurlbert and Kamat's Conjecture, Caterpillars, Lobsters 

\end{keyword}

\end{frontmatter}

\section{Introduction}
  The vertex and edge sets of $G$ are denoted by $V (G)$ and $E(G)$, respectively. If $H$ is a graph such that $V(H) \subseteq V(G)$ and $E(H) \subseteq E(G)$, then we say that $G$ contains $H$ or $H$ is a subgraph of $G$ and denote it $H\subseteq G$. Given a graph $G$ and a set of vertices $W$, by $G-W$ we denote the subgraph obtained by removing from $G$ every vertex in $W$ and the corresponding edges.  Also, given a vertex $v\in V(G)$ the degree of $v$ is the number of vertices adjacent to $v$ and denoted by $\deg(v)$.
  
     For a positive integer $n$, let $[n]:=\{1,2,\dots,n\}$. If $v_1 , v_2 , \dots , v_n$ are the distinct vertices of a graph $G$ with $E(G) = \{v_i v_{i + 1} : i\in [n-1]\}$, then $G$ is called a $(v_1,v_n)$-path or simply a path. We usually denote paths with the letter $P$.
  
  A graph $G$ is a tree if $|V(G)| \geq 2$ and $G$ contains exactly one $(v,w)$-path for every $v,w\in V(G)$ with $v\neq w$. 
  A vertex $v$ of $G$ is called a pendent vertex if it has only one vertex adjacent, i.e. $\deg(v)=1$. Pendent vertices of trees are also called \emph{leaves}.

   A subset $I$ of $V(G)$ is an independent set of $G$ if $vw \notin E(G)$ for every $v,w\in I$. Let $\mathcal{I}_G$ denote the family of all independent sets of $G$, and $\mathcal{I}^k_G$ denote the family of all independent sets of $G$ of size $k$. For $v \in V(G)$ the family $\ikg (v):= \{ A\in \ikg:  v\in A \}$ is called a \emph{star} of $\ikg$ and $v$ is called its \emph{star center}. 
   
   The study of star centers in graphs is related to the study of Erd\"{o}s-Ko-Rado graphs. 
   A graph $G$ is said to be $k$-EKR (Erd\"os-Ko-Rado) if for any family of independent sets $\mathcal{F}\subset \mathcal{I}^k_G$ satisfying $A\cap B\neq \emptyset$ for every $A,B\in \mathcal{F}$, there is a vertex $x\in V(G)$ such
   that $|\mathcal{F}|\leq \ikg (v)$. Studying this problem Holroyd and Talbot, \cite{Holroyd1} made the following conjecture.
\begin{conjecture}\label{conjekr}
Let $G$ be a graph, and let $\mu(G)$ be the size of its smallest maximal independent set.
Then $G$ is $k$-EKR for every $1\leq k \leq \mu(G)/2$.
\end{conjecture}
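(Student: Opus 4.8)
The plan is to establish Conjecture~\ref{conjekr} by generalizing Katona's cyclic averaging method and reducing the global inequality to a local comparison on paths that the star-comparison tool of this paper can control. Fix $k$ with $1 \le k \le \mu(G)/2$ and let $\mathcal{F} \subseteq \ikg$ be an intersecting family, meaning $A \cap B \neq \emptyset$ for all $A,B \in \mathcal{F}$. I must show $|\mathcal{F}| \le \max_{v \in V(G)} |\ikg(v)|$. It is convenient to restate this spectrally: let $\Gamma$ be the graph whose vertex set is $\ikg$, with two independent $k$-sets adjacent exactly when they are disjoint. Then an intersecting family is an independent set of $\Gamma$, so the conjecture asserts $\alpha(\Gamma) \le \max_v |\ikg(v)|$. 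First I would attack this through linear-programming duality: it suffices to exhibit a fractional clique cover of $\Gamma$—a nonnegative weighting of the cliques of $\Gamma$ in which every member of $\ikg$ receives total weight at least $1$—whose total weight equals $\max_v |\ikg(v)|$, since the total weight of any such cover is an upper bound for $\alpha(\Gamma)$.

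The cliques of $\Gamma$ are families of pairwise-disjoint $k$-independent sets, and the natural source of such cliques is the collection of maximal independent sets of $G$. The key observation is that every $A \in \ikg$ extends to a maximal independent set $M$, that every subset of $M$ is independent in $G$, and that $|M| \ge \mu(G) \ge 2k$. Hence, for each maximal independent set $M$ equipped with a cyclic ordering, the consecutive length-$k$ \emph{arcs} of $M$ are genuine members of $\ikg$, and the classical arc lemma (valid because $|M| \ge 2k$) guarantees that $\mathcal{F}$ contains at most $k$ of the $|M|$ arcs of $M$. Averaging this local bound over all pairs $(M,\sigma)$ of a maximal independent set and a cyclic order, weighted appropriately, produces a global bound on $|\mathcal{F}|$. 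Because every $A$ lies in at least one maximal independent set, the averaging covers all of $\ikg$; the task is to choose the weights on the pairs $(M,\sigma)$ so that the resulting per-set coverage is uniform and the total mass equals the \emph{maximum} star size rather than some average of star sizes.

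The main obstacle is exactly this calibration: unlike the empty-graph case, where every $k$-set lies in the same number of arc configurations and the count collapses cleanly to $\frac{k}{|M|}\binom{|M|}{k} = \binom{|M|-1}{k-1}$, in a general graph different independent sets lie in wildly different numbers of maximal independent sets of differing sizes, so naive averaging yields a bound governed by the \emph{smallest} relevant coverage, not the largest star. To repair this I would induct on $\mu(G)$ and, at the inductive step, invoke the paper's tool comparing stars centered at arbitrary vertices with stars centered at leaves: this tool lets me reweight the averaging so that the extremal contribution is concentrated at a heaviest star center, matching $\max_v |\ikg(v)|$. Concretely, I would fix a vertex $v$ realizing the maximum star, contract the local analysis to the maximal independent sets through $v$, and use the comparison inequalities to dominate the contributions of sets avoiding $v$. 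I expect the delicate point to be verifying that the reweighting still yields a legitimate fractional cover, i.e. that total coverage stays at least $1$ on every $A \in \ikg$ while the total mass does not exceed $\max_v|\ikg(v)|$; controlling this simultaneously is where the independence constraints interact most tightly with the arc lemma. As a fallback I would pursue the Delsarte--Hoffman ratio bound applied to a suitable vertex-weighting of $\Gamma$, but since $\Gamma$ is far from regular and its spectrum is inaccessible for general $G$, I anticipate the cyclic-averaging route, steered by the star-comparison tool, to be the more tractable path.
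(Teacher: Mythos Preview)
The statement you are attempting to prove is a \emph{conjecture}; the paper does not prove it and does not claim to. It is quoted from Holroyd and Talbot as background motivation for studying star centers, and it remains open in general. So there is no ``paper's own proof'' against which to compare your proposal, and your task was ill-posed from the start.

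That said, your sketch also has a concrete structural gap. The only tool this paper develops is the flip along an \emph{escape path}: it shows $|\ikg(v)|\le|\ikg(\ell)|$ when there is a path from $v$ to a pendent vertex $\ell$ with all internal degrees equal to $2$ (except possibly the penultimate vertex). This says nothing whatsoever about graphs with no pendent vertices, and even in trees it only compares certain stars to certain leaf-stars; it does not compare $|\mathcal{F}|$ to any star. Your plan to ``reweight the averaging so that the extremal contribution is concentrated at a heaviest star center'' by invoking this comparison is therefore inapplicable to general $G$. Moreover, the obstacle you yourself name---that different $A\in\ikg$ lie in different numbers of maximal independent sets of different sizes, so the arc-lemma averaging does not collapse to the maximum star---is precisely the reason Conjecture~\ref{conjekr} is open; you describe the difficulty accurately but do not overcome it. The induction on $\mu(G)$ is not specified (removing a vertex can change $\mu$ unpredictably and need not preserve the intersecting property of $\mathcal{F}$), and the Delsarte--Hoffman fallback is, as you note, inaccessible here. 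In short, the proposal identifies a reasonable first attack (Katona-style averaging) and its well-known failure mode, but offers no mechanism to close the gap, and the paper's tool cannot supply one.
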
   
Most of the graphs known to satisfy Conjecture \ref{conjekr} contain at least one isolated vertex, see  \cite{Holroyd2,Borg2,Hurlbert1}.
This is because in order to prove that a graph is $k$-EKR one usually has to find the center of the largest star,
which is trivial when there is an isolated vertex, but can be quite difficult otherwise.
   
   This is why in \cite{Hurlbert1} Hurlbert and Kamat studied  stars in trees, as a first step
   to study Conjecture \ref{conjekr}. There they  conjectured that for any $k \geq 1$ and any tree $T$, there exists a leaf $l\in V(T)$, such that $I^k_T(l)$ is a star of $\mathcal{I}^k_T$ of maximum size.

 \begin{conjecture}\label{conjearbol}
 	For any $k \geq 1$ and any tree $T$, there exists a leaf $l$ of $T$ such that $|\mathcal{I}^k_T(v)| \leq |\mathcal{I}^k_T(l)|$ for
    each $v\in V(T)$.
 \end{conjecture}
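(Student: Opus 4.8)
The plan is to reduce each star count to a count of independent sets in a deleted subgraph and then compare these counts by a swapping operation that pushes the centre toward a leaf. The key reformulation is that, for every $v\in V(T)$,
\[
|\mathcal{I}^k_T(v)| \;=\; |\mathcal{I}^{k-1}_{T-N[v]}|,
\]
where $N[v]$ is the closed neighbourhood of $v$: an independent $k$-set containing $v$ is exactly $\{v\}$ together with an independent $(k-1)$-set avoiding $v$ and all of its neighbours, and this $(k-1)$-set lives in the induced subgraph $T-N[v]$. In particular, for a leaf $l$ with unique neighbour $u$ we have $N[l]=\{l,u\}$, so $|\mathcal{I}^k_T(l)|=|\mathcal{I}^{k-1}_{T-\{l,u\}}|$ deletes only two vertices, whereas a high-degree centre deletes many. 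The conjecture thus asserts that the favourable arithmetic of deleting few vertices at a leaf always beats the structural advantage a branching vertex might enjoy.

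The engine I would build is a \emph{flip}. Fix an edge $vw$ and note that, since $v\sim w$, no independent set contains both, so $\mathcal{I}^k_T(v)$ and $\mathcal{I}^k_T(w)$ are disjoint. For $A\in\mathcal{I}^k_T(v)$ define
\[
\flip_{v\to w}(A)=
\begin{cases}
(A\setminus\{v\})\cup\{w\} & \text{if this set is independent,}\\
A & \text{otherwise.}
\end{cases}
\]
On the \emph{clean} sets --- those $A$ in which no neighbour of $w$ other than $v$ appears --- this is an injection into $\mathcal{I}^k_T(w)$. I would then select the target leaf $l$ as an endpoint of a longest path of $T$; a longest-path argument limits the depth of the subtrees hanging off the internal vertices of the path from $v$ to $l$, which is exactly the kind of control over the branching at $w$ that keeps the clean sets dominant. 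Chaining $\flip$ along the path from an arbitrary $v$ to $l$ should then yield $|\mathcal{I}^k_T(v)|\le|\mathcal{I}^k_T(l)|$ for every $v$.

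To turn this into a proof for all trees I would run an induction on $|V(T)|$: delete the chosen leaf $l$ (or the pair $\{l,u\}$), apply the inductive hypothesis to the smaller tree to locate a leaf realizing the maximum star there, and lift that leaf back to $T$, using the reformulation above to track how each deletion changes the relevant counts of $(k-1)$-sets. The hard part --- and the place where I expect the whole attempt to live or die --- is the \emph{conflict} sets in the flip: those $A\ni v$ for which $(A\setminus\{v\})\cup\{w\}$ fails to be independent because some other neighbour $w'$ of $w$ already lies in $A$. These sets are not handled by the naive swap, and to keep $\flip$ injective one must compensate them against sets already lying in $\mathcal{I}^k_T(w)$. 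Whether such a compensation exists depends sensitively on $k$ and on the local branching at $w$, and controlling it uniformly is the crux; it is precisely here that the global comparison is most delicate, so the structure guaranteed by choosing $l$ at the end of a longest path must be exploited as fully as possible to force the conflict sets to be negligible.
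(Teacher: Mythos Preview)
The statement you are trying to prove is the Hurlbert--Kamat conjecture, and the paper does not prove it: it reports that the conjecture is \emph{false}. Baber, Borg, and Feghali--Johnson--Thomas independently found the same family of counterexamples. Take the tree $T_m$ consisting of a central vertex $v_0$ joined to $v_1$ and $v_2$, with $m$ disjoint paths of length $2$ attached to each of $v_1,v_2$. For $m\ge 3$ and any $5\le k\le 2m+1$ one has $|\mathcal{I}^k_{T_m}(v_0)|>|\mathcal{I}^k_{T_m}(l)|$ for every leaf $l$, so no leaf realises the maximum star.

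Your own write-up already locates the failure point. You correctly observe that the flip $A\mapsto(A\setminus\{v\})\cup\{w\}$ is injective only on the ``clean'' sets, and that the ``conflict'' sets --- those $A$ containing another neighbour of $w$ --- must be compensated. You then hope that choosing $l$ at the end of a longest path gives enough control on the branching at each intermediate $w$ to make the conflict sets negligible. In $T_m$ this hope collapses: from $v_0$ every path to a leaf has as its \emph{first} step a vertex $v_1$ (or $v_2$) of degree $m+1$, so at the very first flip the conflict sets are the $A\in\mathcal{I}^k_{T_m}(v_0)$ meeting one of the $m$ other neighbours of $v_1$. These are far from negligible; for the relevant $k$ they outnumber any surplus in $\mathcal{I}^k_{T_m}(v_1)$, and no longest-path choice avoids this, since every leaf lies behind $v_1$ or $v_2$. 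The induction you sketch cannot rescue this either, because the inequality you want to lift is simply not true in $T_m$.

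What survives of your idea is exactly what the paper proves: if the path from $v$ to a leaf $\ell$ has all \emph{interior} vertices (except possibly the penultimate one) of degree $2$, then there are no conflict sets at all along the way, the flip of the whole path is an honest injection $\mathcal{I}^k_G(v)\hookrightarrow\mathcal{I}^k_G(\ell)$, and $|\mathcal{I}^k_G(v)|\le|\mathcal{I}^k_G(\ell)|$. The paper calls such a path an \emph{escape path} and uses this to show that caterpillars and spiders satisfy the conjecture, and that in lobsters the only possible non-leaf maximisers are spinal vertices of degree $2$ --- precisely the situation of $v_0$ in $T_m$.
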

Hurlbert and Kamat proved this conjecture for $k \leq 4$, but the conjecture was shown to be false independently by, Baber \cite{Baber}, Borg \cite{Borg}, and Feghali, Johnson and Thomas \cite{Feghali}. They all arrived at the same family of counter examples, given by the tree $T_m$ that consists of
a set of $m$ paths of length $2$ connected to a vertex $v_1$, another set of $m$ paths of length $2$ connected to a vertex $v_2$, and a vertex $v_0$ connected to both $v_1$ and $v_2$, see Figure \ref{figlob}. In \cite{Baber,Borg,Feghali} the authors showed that for $m \geq 3$, the vertex $v_0$ of $T_m$ satisfies that and $|\mathcal{I}^k_T(l)| < |\mathcal{I}^k_T(v_0)|$ for any leaf $l$ of $T_m$ and any $5 \leq k \leq 2m + 1$ thus giving a counterexample to the Conjecture. 

\begin{figure}[h]
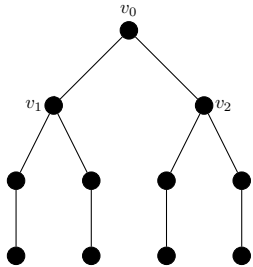
\label{figlob}
	\centering
	\tikz
{
	\node[circle,scale=.7] at (3,0.26) (v) {$v_0$};
	\node[fill,circle,draw,scale=.7] at (3,0) (1) {};
	\node[circle,scale=.7] at (1.74,-1) (v) {$v_1$};
	\node[fill,shape=circle,draw,scale=.7] at (2,-1) (2) {};
\node[circle,scale=.7] at (4.26,-1) (v) {$v_2$};
	\node[fill,shape=circle,draw,scale=.7] at (4,-1) (3) {};
	\node[fill,shape=circle,draw,scale=.7] at (1.5,-2) (4) {};
	\node[fill,shape=circle,draw,scale=.7] at (2.5,-2) (5) {};
	\node[fill,shape=circle,draw,scale=.7] at (3.5,-2) (6) {};
	\node[fill,shape=circle,draw,scale=.7] at (4.5,-2) (7) {};
	\node[fill,shape=circle,draw,scale=.7] at (1.5,-3) (8) {};
	\node[fill,shape=circle,draw,scale=.7] at (2.5,-3) (9) {};
	\node[fill,shape=circle,draw,scale=.7] at (3.5,-3) (10) {};
	\node[fill,shape=circle,draw,scale=.7] at (4.5,-3) (11) {};
	\draw (1) [scale=1] to (2);
	\draw (1) [scale=1] to (3);
	\draw (2) [scale=1] to (4);
	\draw (2) [scale=1] to (5);
	\draw (3) [scale=1] to (6);
	\draw (3) [scale=1] to (7);
	\draw (2) [scale=1] to (2);
	\draw (4) [scale=1] to (8);
	\draw (5) [scale=1] to (9);
	\draw (6) [scale=1] to (10);
	\draw (7) [scale=1] to (11);
}
\caption{The graph $T_2$, whose largest k-star for $k\geq 5$ is centered at $v_0$.}
\end{figure}

We say that a graph $G$ satifies $HK$ if for any $k \geq 1$, there exists a pendent vertex $l$ of $G$ such that 
$|\mathcal{I}^k_G(v)| \leq |\mathcal{I}^k_G(l)|$ for
each $v\in V(G)$.
In \cite{Hurlbert2}, Hurlbert and Kamat considered spiders, which are trees that have exactly one vertex of degree greater than $2$. They use a function on the paths of the spider called ``flip'' to prove that spiders satisfy $HK$. They then used flips together with a different function called ``switch'' to order the size of the stars at the 
different leaves based on the length and parity of the path from the leaf to the vertex of maximum degree.
This is the only result so far showing that a family of graphs satisfy $HK$.

In this manuscript we study other families of graphs that satisfy $HK$.
A tree $C$ is a {caterpillar} if $G$ removing the leaves and incident edges produces a path graph $P$, called the spine. A tree $L$
is called a {lobster} if removing the leaves and incident edges produces a caterpillar $C$. 
Notice that the graph $T_m$ studied in \cite{Baber,Borg,Feghali} is a lobster. Thus we know that lobsters do not necessarily satisfy $HK$, and raises the question of whether caterpillars do satisfy it. We answer this question
by improving on the ``flip'' technique from \cite{Hurlbert2} to bound the size of the stars centered at some vertices by the size of stars centered at pendent vertices. We also use this technique to show that if the star centered at a vertex $v$ is larger than the stars centered at any leaf $\ell$, then $\deg(v)=2$ and $v$ is in the spine of the caterpillar obtained by removing the leafs of $L$.
We also show that the technique is more general, by mentioning another family of graphs that satisfy $HK$.

The rest of this paper is organized as follows. In Section 2 we introduce our technique. In Section 3 we apply the technique to show that some families of graphs, including caterpillars, are $HK$, and to study the possible centers of stars in lobsters. In Section 4 we give some concluding remarks.
\section{On the flipping technique}
We start this section with a quite straightforward result, showing that ``flipping'' a path $P$ gives a bijection from the family of independent sets of $P$ containing one leaf of the path to the family of independent sets of $P$ containing the other leaf. We then extend this idea to flipping a path $P$ contained in a bigger graph $G$, assuming that $P$ satisfy certain conditions, and show that this gives an injection from the family of independent sets of $G$ containing
a vertex $v$, to the family of independent sets of $G$ containing a pendent vertex $l$. Our technique ends up being a corollary of this last result.

\begin{definition}
	Let $\flip: V(P)\rightarrow V(P)$ be defined by $\flip(v) = n+1-v$. 
\end{definition}
Notice that $\flip$ is its own inverse function, i.e. $\flip ^2=id_{V(P)}$.

Considering $\flip$ as function from $\ikp(1)$ to $f(V(P))$, by mapping each set $A\subseteq V(P)$ to $f(A)$, we have the following straightforward lemma.

\begin{lemma}\label{lafuncion}
The function $\flip$ maps independent sets into independent sets, and induces a bijection from $\mathcal{I}^k_P$ onto itself. Furthermore,  $\flip(\mathcal{I}_P(1))= \mathcal{I}_P(n)$. 
\end{lemma}
\begin{proof}
Let $A$ be an independent set and assume $v,w\in \flip(A)$, with $v\neq w$. Then $v=n+1-x$ and $w=n+1-y$, where $x,y\in A$. Hence $x\neq y$ and $|v-w|=|x-y|>1$. Thus, $\flip(A)$ is an independent set, and $\flip$ maps independent sets 
into independent sets.
Hence $\flip$ induces a bijection from $\mathcal{I}^k_P$ onto itself, because it is a bijection on $V(P)$.

Finally, notice that for every $A\in \ikp(1)$, $1\in A$ and $A$ is an independent $k$-set. Thus, $\flip(1)=n\in \flip(A)$ and $\flip (A)$ is an independent set containing $n$.
\end{proof}

We are going to extend the $\flip$ function to more general graphs, but for this we need to focus on a special type of paths.
\begin{definition}
Let $G$ be a graph and $P=v_1v_2,\dots v_n$ a path  of length $n$ such that $P\subset G$. We say that $P$ is an {escape path from $v_1$ to $v_n$ in G} if $\deg(v_n)=1$ and $\deg(v_i)=2$ for every $2\leq i \leq n-2$.
If this is the case we say that {$v_1$ has an escape path to $v_n$}.
\end{definition}
Notice that if $P$ is an escape path in $G$, then  $G$ is obtained by joining a graph $G_1$ to the first vertex of the path $P$, a graph $G_2$ to the second to last
vertex of $P$, and a graph $G_3$ to both vertices, (see Figure \ref{fig2}).

\begin{figure}
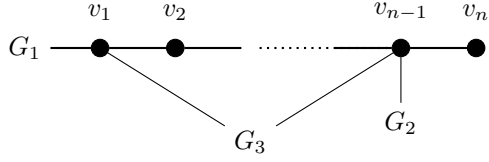

	\centering
	\tikz
	{
		\node[] at (0,0) (g1) {$G_1$};
		\node[fill,shape=circle,draw,scale=.7] at (1,0) (1) {};
		\node[shape=rectangle,above=7pt] at (1) (11) {$v_1$};			
		\node[fill,shape=circle,draw,scale=.7] at (2,0) (2) {};
		\node[shape=circle,scale=.7] at (3,0) (a) {};
		\node[shape=rectangle,above=7pt] at (2) (22) {$v_2$};
		\node[shape=circle,scale=.7] at (4,0) (b) {};
		\node[fill,shape=circle,draw,scale=.7] at (5,0) (n) {};
		\node[shape=rectangle,above=7pt] at (n) (nn) {$v_{n-1}$};	
		\node[fill,shape=circle,draw,scale=.7] at (6,0) (n1) {};
		\node[shape=rectangle,above=7pt] at (n1) (n1n1) {$v_n$};
		\node[] at (5,-1) (g2) {$G_2$};								
		\node[] at (3,-1.25) (g3) {$G_3$};		
		\draw (1) [scale=1,thick] to (2);
		\draw (2) [scale=1,thick] to (a);
		\draw (b) [scale=1,thick] to (n);
		\draw (g1) [scale=1,thick] to (1);
		\draw (a) [dotted,scale=1,thick] to (n);
		\draw (n) [scale=1,thick] to (n1);
		\draw (n) [scale=1] to (g2);
		\draw (n) [scale=1] to (g3);
		\draw (1) [scale=1] to (g3);
	}
	\caption{A graph $G$ containing an escape path $P$.}
	\label{fig2}
\end{figure}

\begin{definition}
Let $G$ be a graph and $P=v_1v_2,\dots v_n$ be an escape path from $v_1$ to $v_n$ in $G$. Then the {flip of $P$ in $G$}, $\flip_P:V(G)\rightarrow V(G)$, is the function defined as follows
		\[
	\flip_P(v) = \begin{cases}
	v & \text{if $v\notin V(P)$}\\
	v_{n+1-i} & \text{if $v=v_i\in V(P)$}.
	\end{cases}
	\]
\end{definition}
The flip of $P$ acts as the function $\flip$ on the vertices of $P$, and leave the vertices outside of $P$ fixed.

Again it is easy to see that $\flip_P^2(v)=v$ for every vertex $v$. On the other hand,
 $\flip_P$ does not necessarily map independent set to independent sets. As an example take the graph in Figure \ref{fig2}. If $A$ is an independent set containing $v_n$ and a vertex $v\in \left(V(G_1)\cup V(G_3)\right)$ adjacent to $v_1$, then
 $\flip_P(A)$ is not independent as it contains $v$ and $v_1$. Similarly, if $B$ is an independent set containing $v_2$ and a vertex $w\in \left(V(G_2)\cup V(G_3)\right)$ adjacent to $v_{n-1}$, then $\flip_P(B)$ is not independent as it contains
 both $w$ and $v_2$. The next result shows that  $\flip_P$ induces a one to one mapping of independent sets containing $v_1$ into independent sets containing $v_n$.
\begin{lemma}\label{laFuncion}
Let $G$ be a graph. If $P=v_1,v_2,\dots , v_n$ is an escape path from $v_1$ to $v_n$ in $G$, then $\flip_P$ 
induces a one to one mapping from $\mathcal{I}_G(v_1)$ into $\mathcal{I}_G(v_n)$.
\end{lemma}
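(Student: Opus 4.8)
The plan is to verify the two required properties—that $\flip_P$ sends $\mathcal{I}_G(v_1)$ into $\mathcal{I}_G(v_n)$, and that it does so injectively—by exploiting the involutive nature of $\flip_P$ together with the restrictive degree conditions defining an escape path. Injectivity I would dispose of immediately: since $\flip_P$ is an involution on $V(G)$ (because $\flip_P^2=\mathrm{id}$), the induced map on subsets of $V(G)$ is a bijection, so its restriction to $\mathcal{I}_G(v_1)$ is automatically one-to-one. Everything therefore reduces to showing that for $A\in\mathcal{I}_G(v_1)$ the image $B:=\flip_P(A)$ is an independent set containing $v_n$.

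That $v_n\in B$ is immediate, since $v_1\in A$ and $\flip_P(v_1)=v_n$. The heart of the argument is independence, and here I would organize the possible ``bad edges'' of $B$ according to the structure of Figure \ref{fig2}. Write $W=A\setminus V(P)$ for the part of $A$ lying outside the path; since $\flip_P$ fixes $W$ pointwise we have $W\subseteq B$, and $W$ is independent because $A$ is. The path vertices of $B$ are the $\flip$-images of the path vertices of $A$, and since $A\cap V(P)$ is independent in $P$, Lemma \ref{lafuncion} guarantees $B\cap V(P)$ is independent in $P$ as well; thus no edge internal to $P$ is violated. It remains to rule out edges of $G$ joining a path vertex to a non-path vertex, as well as the single possible chord.

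The key observation is that the only vertices of $P$ carrying edges to the rest of $G$ are $v_1$ and $v_{n-1}$: by the escape-path hypothesis every $v_i$ with $2\le i\le n-2$ has degree $2$ and $v_n$ has degree $1$, so all of their incident edges already lie on $P$. This is exactly where the two failure modes flagged before the statement live, and I would eliminate each using the star hypothesis $v_1\in A$. First, $v_{n-1}\in B$ would force $\flip_P(v_{n-1})=v_2\in A$; but $v_1\in A$ together with $v_1v_2\in E(G)$ gives $v_2\notin A$, so in fact $v_{n-1}\notin B$ and no edge incident to $v_{n-1}$ (whether to $W$, to $v_1$, or elsewhere) can appear. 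Second, if $v_1\in B$ then any external neighbour $w$ of $v_1$ would have to lie in $W\subseteq A$; but $v_1\in A$ and the independence of $A$ forbid any neighbour of $v_1$ from belonging to $A$. Hence neither problematic configuration occurs, $B$ is independent, and $B\in\mathcal{I}_G(v_n)$.

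I expect the main obstacle to be bookkeeping rather than any deep idea: one must be sure the degree conditions really confine all ``external'' and chordal edges to the two vertices $v_1$ and $v_{n-1}$, and one must check the small-$n$ cases (where $v_{n-1}$ may coincide with $v_1$, or where the interior of $P$ is empty) so that the two exclusions above still apply verbatim. Once the edge analysis is correctly compartmentalized, the star condition $v_1\in A$ does all the work, since it is precisely the hypothesis that simultaneously forbids a neighbour of $v_1$ in $A$ and forces $v_2\notin A$.
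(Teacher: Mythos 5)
Your proposal is correct and follows essentially the same route as the paper's proof: the same decomposition into $B\cap V(P)$ (independent via Lemma \ref{lafuncion}), the fixed part outside $P$, the observation that only $v_1$ and $v_{n-1}$ have edges leaving the path, the star hypothesis $v_1\in A$ forcing $v_2\notin A$ and hence $v_{n-1}\notin B$, and injectivity from $\flip_P$ being an involution. If anything, you are marginally more careful than the paper, which asserts that any bad edge must join $V(P)$ to $G-V(P)$ and thereby tacitly skips the possible chord $v_1v_{n-1}$; your explicit exclusion $v_{n-1}\notin B$ covers that case as well.
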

\begin{proof}
Let $A\in \mathcal{I}_G(v_1)$, and consider $\flip_P(A)$.

As $\flip_P(v_1)=v_{n+1-1}=v_n$, $v_n\in \flip_P(A)$. We need to show that  $\flip_P(A)$ is an independent set.

As $\flip_P$ acts as $\flip$ on the vertices of $P$, $\flip_P(A)\cap V(P)$ is an independent set by Lemma \ref{lafuncion}.
As $\flip_P$ fixes the vertices in $G- V(P)$, $\flip_P(A)\cap V\left(G- V(P)\right)$ is an independent set.
Then, the only way for $\flip_P(A)$ to have adjacent vertices is for one of the vertices to be in $P$ and the other to be in $G- V(P)$. 
But the only vertices in $P$ having neighbors outside of $P$ are vertices $v_1$ and $v_{n-1}$.

The set $A$ does not contain any vertices in $G- V(P)$ that are neighbors with $v_1$, because $A$ is independent. Thus, as $\flip_P$ fixes vertices in $G- V(P)$, $\flip_P(A)$ does not contain vertices in $P$ that are adjacent to $v_1$.
Furthermore, the independence of $A$ also ensures that $v_2\not\in A$ and thus $\flip_P(A)$ does not contain $v_{n-1}$.
Therefore, no pair of vertices in $\flip_P(A)$ can be adjacent.
Hence, $\flip_P$ maps $\mathcal{I}_G(v_1)$ into $\mathcal{I}_G(v_n)$.

But, $\flip_P^2 (v)=v$ for every vertex $v$, we have that $\flip_P^2(B)=B$ for every set $B$, hence $\flip_P$ induces a one to one mapping on sets of vertices.

Therefore,   $\flip_P$ 
induces a one to one mapping from $\mathcal{I}_G(v_1)$ into $\mathcal{I}_G(v_n)$.
\end{proof}
In particular, Lemma \ref{laFuncion} implies that  $|\ikg(v_1)| \leq |\ikg(v_n)|$, yielding.
\begin{theorem}\label{teogeneral}
Let $G$ be a graph, $v\in V(G)$, and $k\geq 1$. If there is an escape path from $v$ to a pendent vertex $\ell$, then $|\ikg(v)| \leq |\ikg(l)|$.
\end{theorem}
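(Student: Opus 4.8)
The plan is to derive the inequality directly from Lemma \ref{laFuncion}, which already supplies almost everything we need; the only additional observation is that the flip preserves the size of a set. Write $P = v_1 v_2 \dots v_n$ for the escape path with $v_1 = v$ and $v_n = \ell$. Lemma \ref{laFuncion} provides a one-to-one map $\flip_P \colon \mathcal{I}_G(v) \to \mathcal{I}_G(\ell)$. First I would record that, because $\flip_P$ is a permutation of $V(G)$ (indeed an involution, since $\flip_P^2 = id$), it carries any vertex set bijectively onto its image, so $|\flip_P(A)| = |A|$ for every $A \subseteq V(G)$.

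Next I would restrict attention to the size-$k$ layer. If $A \in \ikg(v)$, then $A$ is an independent set of size $k$ containing $v$, so by Lemma \ref{laFuncion} the set $\flip_P(A)$ is an independent set containing $\ell$, and by the cardinality remark $|\flip_P(A)| = |A| = k$; hence $\flip_P(A) \in \ikg(\ell)$. Thus $\flip_P$ restricts to a map $\ikg(v) \to \ikg(\ell)$, whose injectivity is inherited from Lemma \ref{laFuncion} (equivalently, from $\flip_P^2 = id$). A one-to-one map between finite families forces $|\ikg(v)| \leq |\ikg(\ell)|$, which is exactly the claim.

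Since the whole argument is a bookkeeping reduction to the already-proven Lemma \ref{laFuncion}, I do not expect a genuine obstacle here. The single point that deserves care is verifying that the injection of Lemma \ref{laFuncion} respects the size constraint, and this is immediate because $\flip_P$ is a bijection on the vertex set and therefore preserves $|A|$, so independent sets of size $k$ are sent to independent sets of size $k$.
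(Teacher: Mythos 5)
Your proposal is correct and follows essentially the same route as the paper: both deduce the theorem from Lemma \ref{laFuncion} by noting that $\flip_P$, being an injection (indeed an involution) on vertex sets, restricts to a one-to-one map from $\mathcal{I}^k_G(v)$ into $\mathcal{I}^k_G(\ell)$. Your explicit verification that $\flip_P$ preserves cardinality and hence respects the size-$k$ layer is a detail the paper leaves implicit, but it is the same argument.
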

\begin{proof}
The result follows from Lemma \ref{laFuncion}, as $\flip_P$ induces a one to one mapping from $\mathcal{I}_G(v)$ into $\mathcal{I}_G(\ell)$.
\end{proof}
Theorem \ref{teogeneral} proves to be an important tool in the study of stars in graphs, as we show in the next section.
\section{Star Centers for some Graphs and Trees}
In this section we study the implications of Theorem \ref{teogeneral} in different families of graphs.
We begin by proving that spiders and caterpillars are HK. Next we use Theorem \ref{teogeneral} to study lobsters.
We end the section by showing a family of graphs that satisfy $HK$ but are not trees.
Note that the graphs in Theorem \ref{teogeneral} are in a sense quite generic, so the idea is to identify paths whose end vertices are pendent vertices thus the stars centered on vertices other than the pendent vertex, are smaller than the star centered on the pendent vertex.

\subsection{Spiders}
Given that our technique is based on the technique used in \cite{Hurlbert2}, it is an unsurprising corollary of Theorem \ref{teogeneral} that spiders satisfy HK. To see this notice that every vertex of degree $2$ has an escape path to at least one leaf, whereas the vertex of degree greater than $2$ has escape paths to every leaf.
\begin{corollary}
Spiders satisfy HK.
\end{corollary}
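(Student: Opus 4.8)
The plan is to invoke Theorem~\ref{teogeneral} one leg at a time. Recall that a spider $S$ has a unique vertex $c$ of degree $d>2$ (the body), from which $d$ internally disjoint paths (the legs) emanate, each ending in a leaf; every other vertex is either a leaf or an internal vertex of some leg, and the internal leg vertices are precisely the vertices of degree $2$. Fix $k\ge 1$. Since $S$ has finitely many leaves, I would let $\ell^\ast$ be a leaf maximizing $|\ikg(\ell)|$ over all leaves $\ell$, and claim that $\ell^\ast$ witnesses $HK$ for this $k$, i.e. $|\ikg(v)|\le|\ikg(\ell^\ast)|$ for every $v\in V(S)$. With this reduction the leaf case is immediate from maximality, so the only real task is to show that every non-leaf vertex admits an escape path to some leaf.

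To verify the escape paths, first consider the body $c$. Take any leg $c=u_0,u_1,\dots,u_m$ with $u_m$ a leaf, and write it as $P=v_1\cdots v_n$ with $v_1=c$, $v_n=u_m$, $n=m+1$. Then $v_n$ has degree $1$ and each of $v_2,\dots,v_{n-2}$ has degree $2$, while the starting vertex $v_1=c$ and the penultimate vertex $v_{n-1}$ carry no degree requirement in the definition of an escape path; hence $P$ is an escape path from $c$ to $u_m$. Short legs cause no trouble: if $m\le 2$ the degree-$2$ condition on $v_2,\dots,v_{n-2}$ is vacuous. Thus $c$ has an escape path to every leaf. Next, for a vertex $v$ of degree $2$, which is an internal vertex of a unique leg, I would route along the subpath $v=w_0,w_1,\dots,w_p$ running from $v$ out to the leaf $w_p$ of that leg; all of $w_1,\dots,w_{p-1}$ have degree $2$ and $w_p$ has degree $1$, so this is an escape path from $v$ to $w_p$, with $v$ itself as the unconstrained starting vertex. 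The one point to be careful about here is the direction: one must walk toward the leaf rather than toward $c$, since routing through $c$ would place a vertex of degree $>2$ in the interior and violate the escape-path condition.

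Combining these observations finishes the argument. For any non-leaf $v\in V(S)$ there is an escape path to some leaf $\ell$, so $|\ikg(v)|\le|\ikg(\ell)|\le|\ikg(\ell^\ast)|$ by Theorem~\ref{teogeneral} and the choice of $\ell^\ast$; and for a leaf $v$ we have $|\ikg(v)|\le|\ikg(\ell^\ast)|$ directly by maximality. Hence $\ell^\ast$ dominates every vertex, and since $k$ was arbitrary, $S$ satisfies $HK$. I do not expect a genuinely hard step: Theorem~\ref{teogeneral} does all the heavy lifting, and the only care required is the bookkeeping of orienting each escape path toward a leaf and passing from the per-vertex bounds to a single dominating leaf via the finite maximization over leaves.
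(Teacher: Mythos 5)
Your proof is correct and follows essentially the same route as the paper, which proves the corollary by the one-line observation that every degree-$2$ vertex has an escape path to a leaf and the center has escape paths to every leaf, then invokes Theorem~\ref{teogeneral}. You merely spell out the details the paper leaves implicit (the vacuity of the degree condition on short legs, orienting paths away from the center, and passing from per-vertex bounds to a single dominating leaf $\ell^\ast$), all of which is sound.
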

\subsection{Caterpillar}
It is easy to see that every vertex $v$ of a caterpillar $G$ that is not a leaf has an escape path to a leaf. To see this just take the leaf $\ell$ that is closest to $v$. Then if $w$ is vertex in the path from $v$ to $\ell$ that is not adjacent to $\ell$, $w$ cannot be adjacent to a leaf, and thus $\deg(w)=2$. 
Hence we have the following.
\begin{corollary}\label{cater}
Caterpillars satisfy HK.
\end{corollary}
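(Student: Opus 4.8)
The plan is to combine the escape-path observation for caterpillars with Theorem \ref{teogeneral}, and then to single out one leaf whose star dominates all the others. Let $C$ be a caterpillar and fix $k \ge 1$. First I would establish rigorously the claim sketched just above the statement: every vertex $v$ of $C$ that is not a leaf admits an escape path to some leaf $\ell_v$. To do this, I would take $\ell_v$ to be a leaf at minimum distance from $v$ and let $P = v = v_1, v_2, \dots, v_n = \ell_v$ be the unique path joining them in the tree $C$. Since $v$ is not a leaf it lies on the spine, so $n \ge 2$ and $P$ is well defined. It then remains to verify the escape-path conditions: $\deg(v_n) = \deg(\ell_v) = 1$ holds because $\ell_v$ is a leaf, and the main work is to show that each interior vertex $v_i$ with $2 \le i \le n-2$ has degree exactly $2$.

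The degree-$2$ verification is the technical heart. For $2 \le i \le n-1$ the vertex $v_i$ is internal on $P$, so it has the two path-neighbors $v_{i-1}$ and $v_{i+1}$ and hence is not a leaf; in a caterpillar this forces $v_i$ onto the spine. Now suppose some $v_i$ with $i \le n-2$ were adjacent to a leaf $\ell'$. Since $\ell'$ has $v_i$ as its only neighbor, the path from $v$ to $\ell'$ is $v_1, \dots, v_i, \ell'$, of length $i \le n-2 < n-1 = d(v, \ell_v)$, contradicting the minimality of $\ell_v$. Thus no interior $v_i$ is adjacent to a leaf, so every neighbor of $v_i$ lies on the spine; as the spine is the induced path $C$ minus its leaves, $v_i$ has at most two spine-neighbors, and therefore $\deg(v_i) = 2$. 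This confirms that $P$ is an escape path, and Theorem \ref{teogeneral} yields $|\mathcal{I}^k_C(v)| \le |\mathcal{I}^k_C(\ell_v)|$.

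The remaining step upgrades these per-vertex bounds into a single witnessing leaf. I would choose a leaf $\ell^\ast$ that maximizes $|\mathcal{I}^k_C(\ell)|$ among all leaves $\ell$ of $C$. For a leaf $v$, the inequality $|\mathcal{I}^k_C(v)| \le |\mathcal{I}^k_C(\ell^\ast)|$ is immediate from the choice of $\ell^\ast$; for a non-leaf $v$, the previous paragraph gives $|\mathcal{I}^k_C(v)| \le |\mathcal{I}^k_C(\ell_v)| \le |\mathcal{I}^k_C(\ell^\ast)|$. Since $v \in V(C)$ and $k \ge 1$ were arbitrary, $\ell^\ast$ witnesses that $C$ satisfies $HK$.

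I expect the main obstacle to be entirely contained in the escape-path claim, and specifically in the case analysis forcing $\deg(v_i) = 2$ for the interior vertices: one must use both the minimality of the chosen leaf (to rule out leaf-legs hanging off interior vertices) and the fact that the spine is a path (to cap the number of spine-neighbors at two). By contrast, the final selection of $\ell^\ast$ is a routine maximization over finitely many leaves. Degenerate caterpillars, such as a single edge or a star $K_{1,m}$ whose spine is a single vertex, are handled uniformly, since for them the range $2 \le i \le n-2$ is empty and the interior degree condition is vacuous.
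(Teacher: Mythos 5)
Your proof is correct and takes essentially the same approach as the paper: choose the leaf $\ell_v$ closest to $v$, observe that by minimality no interior vertex of the $v$--$\ell_v$ path can be adjacent to a leaf and hence has degree $2$ (since the spine is a path), and then apply Theorem \ref{teogeneral} together with a maximization over the finitely many leaves. You have simply filled in the details (the degree-$2$ verification, ruling out path vertices as the offending leaf, and the degenerate cases) that the paper leaves as a two-line sketch preceding the corollary.
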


\subsection{Lobster Graphs}
In general, Theorem \ref{teogeneral} gives some insight into which vertices may be the center of the largest stars.
This is specially true for lobsters. Let $L$ be a lobster, $C$ the caterpillar obtained by removing the leaves of $L$, and $P$ the path  obtained by removing the leaves of $C$. If $v\in V(P)$ we say that $v$ is a spinal vertex. Clearly every vertex in $L-V(C)$ is a leaf. Every vertex in $L-V(P)$ is either a leaf, or adjacent to a leaf and thus has an escape path to a leaf.  For $v\in V(P)$,  if $v$ has degree at least $3$ in $L$, then $v$ has a neighbor $w\not\in V(P)$. As $L$ is a lobster, the vertex $w$ is either a leaf, or is neighbor with a leaf $\ell$. In either case, there is an escape path from $v$ to a leaf.
Thus the only vertices that may not have escape paths are the vertices in $V(P)$ of degree $2$. Hence, Theorem \ref{teogeneral} implies that the largest stars are centered in leaves or in vertices on the path $P$ of degree $2$.

\begin{theorem}\label{teolobs}
Let $L$ be a lobster, $v\in V(L)$ and $k\geq 1$. If $v$ is not a spinal vertex of $L$ or $\deg(v)\neq 2$, then there is a leaf $\ell \in V(L)$ such that $\mathcal{I}_L^k(\ell)\geq \mathcal{I}_L^k(v)$. 
\end{theorem}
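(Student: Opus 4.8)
The plan is to show that, under the stated hypothesis, $v$ either is itself a leaf or possesses an escape path to some leaf $\ell$; once this is established, the inequality $|\mathcal{I}^k_L(v)| \le |\mathcal{I}^k_L(\ell)|$ is immediate from Theorem \ref{teogeneral}. First I would fix the nested structure $V(P) \subseteq V(C) \subseteq V(L)$, where $C$ is the caterpillar obtained by deleting the leaves of $L$ and $P$ is the spine obtained by deleting the leaves of $C$, so that the spinal vertices are exactly the vertices of $P$. The hypothesis ``$v$ is not a spinal vertex or $\deg(v)\neq 2$'' then splits into two regimes: $v\notin V(P)$, and $v\in V(P)$ with $\deg(v)\neq 2$. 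I would also record the observation that every spinal vertex is a non-leaf of $C$, hence has degree at least $2$ in $C$ and therefore in $L$; consequently, in the second regime, $\deg(v)\neq 2$ actually forces $\deg(v)\ge 3$.

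I would then carry out a case analysis on the location of $v$. If $v$ is a leaf of $L$, take $\ell=v$ and there is nothing to prove. If $v\notin V(P)$ but $v$ is not a leaf, then $v$ is a leaf of $C$ that was not a leaf of $L$, so it lost a neighbor when passing from $L$ to $C$; that neighbor is a leaf $\ell$ of $L$, and the single edge $v\ell$ is an escape path (the endpoint condition $\deg(\ell)=1$ holds and the interior condition is vacuous). Finally, if $v\in V(P)$ with $\deg(v)\ge 3$, then since $v$ has at most two neighbors along the path $P$, it must have a neighbor $w\notin V(P)$. Because $L$ is a lobster, $w$ is either a leaf --- in which case $v\,w$ is an escape path --- or $w$ is adjacent to a leaf $\ell$, in which case $v\,w\,\ell$ is an escape path from $v$ to $\ell$. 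In every nontrivial case, Theorem \ref{teogeneral} yields the desired bound.

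The step requiring the most care is the verification that the paths exhibited above genuinely satisfy the escape-path definition, and here the precise form of that definition does the work: only the interior vertices $v_2,\dots,v_{n-2}$ are required to have degree $2$, while the second-to-last vertex $v_{n-1}$ may have arbitrary degree. This is exactly what legitimizes the length-three path $v\,w\,\ell$ in the last case, where the middle vertex $w=v_{n-1}$ may well have degree larger than $2$ (it is the vertex of $C$ to which the leaf $\ell$ is attached). The remaining, more bookkeeping-style obstacle is to confirm that the three regimes genuinely exhaust all vertices allowed by the hypothesis --- that a non-leaf vertex outside $P$ is always a leaf of $C$ adjacent to a leaf of $L$, and that a spinal vertex can never have degree $1$ --- both of which follow directly from the nested definitions of caterpillar and lobster.
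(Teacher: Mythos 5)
Your proof is correct and takes essentially the same route as the paper, whose argument (given in the paragraph preceding the theorem statement) is exactly your case analysis: every vertex of $L-V(P)$ is a leaf or adjacent to a leaf, and a spinal vertex of degree at least $3$ has a neighbor $w\notin V(P)$ that is itself a leaf or adjacent to a leaf, yielding in each case an escape path to which Theorem \ref{teogeneral} applies. Your explicit checks --- that the hypothesis forces $\deg(v)\geq 3$ in the spinal case, and that the length-two and length-three paths satisfy the escape-path definition because $v_{n-1}$ may have arbitrary degree --- merely make precise steps the paper leaves implicit.
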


In some sense, Theorem \ref{teolobs} shows that lobsters are as close as possible to satisfy HK. This is because the degrees of the centers of the largest stars are either $1$ or $2$.

\subsection{Sunlet graphs}
Theorem \ref{teogeneral} can be applied to show that many different graphs, not necessarily trees, satisfy $HK$.
To illustrate this, we study sunlet graphs. 
The $n$-sunlet graph is the graph on $2n$ vertices obtained by attaching $n$ pendant edges to a cycle graph $C_n$. Also attaching $n$ paths of different lengths.
Again, it is easy t see that every non pendent vertex has an escape path to a pendent vertex, yielding the following result.
\begin{corollary}
Sunlet graphs satisfy HK.
\end{corollary}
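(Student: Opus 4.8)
The plan is to apply Theorem \ref{teogeneral} to every vertex of the sunlet that is not a leaf, and then to promote the resulting pointwise bounds to the single uniform bound that $HK$ demands. Write the $n$-sunlet as a cycle $c_1 c_2 \cdots c_n c_1$ together with pendant vertices $p_1,\dots,p_n$, where each $p_i$ is joined only to $c_i$. The only non-pendant vertices are the cycle vertices $c_1,\dots,c_n$, and each has degree $3$ (its two cyclic neighbours together with $p_i$); the vertices $p_i$ are the leaves, of degree $1$.

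First I would exhibit an escape path out of each cycle vertex. For a fixed $i$, take the path $P=v_1 v_2$ with $v_1=c_i$ and $v_2=p_i$, that is, the single pendant edge $c_i p_i$. Here $n=2$, so the interior-degree requirement that $\deg(v_j)=2$ for $2\le j\le n-2$ is vacuous, while the terminal requirement $\deg(v_n)=\deg(p_i)=1$ holds because $p_i$ is a leaf. Hence $P$ is an escape path from $c_i$ to the pendant vertex $p_i$, and Theorem \ref{teogeneral} gives $|\mathcal{I}^k_G(c_i)|\le |\mathcal{I}^k_G(p_i)|$ for every $i$ and every $k\ge 1$. If one prefers to allow the generalised sunlet in which a whole path, rather than a single edge, is attached to each $c_i$, the same argument works with $P$ taken to be the entire attached path down to its leaf: all of its interior vertices have degree $2$, so it is again an escape path.

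It remains to produce one leaf that dominates all vertices simultaneously, which is the only genuine point of the argument. Fix $k$ and let $v^\ast$ maximise $|\mathcal{I}^k_G(v)|$ over $v\in V(G)$. If $v^\ast$ is a leaf we are done. Otherwise $v^\ast=c_i$ for some $i$, and the bound above gives $|\mathcal{I}^k_G(c_i)|\le |\mathcal{I}^k_G(p_i)|$; since $c_i$ already attains the maximum, this forces $|\mathcal{I}^k_G(p_i)|=|\mathcal{I}^k_G(c_i)|$, so the maximum is also attained at the leaf $p_i$. In either case the maximum star size is realised at some pendant vertex $\ell$, and then $|\mathcal{I}^k_G(v)|\le |\mathcal{I}^k_G(\ell)|$ for all $v$, which is exactly $HK$.

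I expect no serious obstacle: once the pendant edge is recognised as a legitimate (degenerate) escape path, Theorem \ref{teogeneral} does all of the combinatorial work, and the passage from the pointwise inequalities to a single dominating leaf is the short maximisation argument above. For the standard sunlet one may replace that last step by symmetry, since the rotation $c_i\mapsto c_{i+1}$, $p_i\mapsto p_{i+1}$ is an automorphism and hence all leaf-stars have equal size, so any leaf serves as $\ell$; but the maximisation argument is cleaner and has the advantage of also covering the case of attached paths of differing lengths.
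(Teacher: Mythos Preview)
Your proof is correct and follows the same approach as the paper, which simply asserts that every non-pendant vertex has an escape path to a pendant vertex and invokes Theorem~\ref{teogeneral}. You have merely made explicit what the paper leaves implicit: that the pendant edge $c_ip_i$ is a legitimate (degenerate) escape path, and that the pointwise inequalities can be upgraded to a single dominating leaf via the obvious maximisation argument.
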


\section{Conclusion}
In this paper we gave a tool to reduce the number of vertices one has to study in order to find the center of the largest stars. We used this to prove that the center of the largest stars in caterpillars and sunlet graphs are leaves,
and that the center of the largest stars in lobster graphs are either leaves or spine vertices of degree 2.
A next step to prove that any of this families is EKR is to decide among leaves (and spine vertices of degree 2), which 
one is the center of the largest star.
\section{Acknowledgements}
This work was partially supported by the Universidad Nacional de San Luis, grant PROICO 03-0918, 
and MATH AmSud, grant 18-MATH-01. 
The first author was supported by a doctoral scholarship from 
Consejo Nacional de Investigaciones Cient\'{i}ficas y 
T\'{e}cnicas (CONICET).

\end{document}